\documentclass[12pt]{amsart}
\usepackage{hyperref,amsmath,amsfonts,amssymb,amsthm,enumerate,mathtools,graphicx}
\usepackage[a4paper, left=2cm, right=2cm, top=3cm, bottom=2cm]{geometry}
\usepackage[utf8]{inputenc}
\usepackage{mathtools}
\usepackage{enumitem}
\usepackage{float}

\theoremstyle{plain}
\newtheorem{thm}{Theorem}
\newtheorem{lemma}{Lemma}

\newtheorem{prop}{Proposition}
\newtheorem{conj}{Conjecture}
\newtheorem{df}{Definition}
\newtheorem*{notat}{Notation}

\theoremstyle{remark}
\newtheorem*{rem}{Remark}

\newcommand{\R}{\mathbb{R}}
\newcommand{\N}{\mathbb{N}}
\newcommand{\Es}[1]{\mathbb{S}^{#1}}

\newcommand*{\diam}{\mathop{}\mathrm{diam\,}}

\newcommand{\vect}[1]{\boldsymbol{#1}}

\newcommand{\abs}[1]{\left|{#1}\right|}

\newcommand{\Abs}[1]{\left\lVert{#1}\right\rVert}
\newcommand{\dist}[2]{\mathrm{dist}\left({#1},{#2}\right)}

\title{A note on directional Lipschitz continuity in~the~Euclidean plane}

\author[D. Hru\v{s}ka]{David Hru\v{s}ka}
\address{Leipzig University, Faculty of Mathematics and Computer Science,
Institute of Mathematics,
Augustusplatz 10, 04 109 Leipzig, Germany}
\email{hruska@math.uni-leipzig.de}

\begin{document}

\begin{abstract}
    We prove a stronger version of a conjecture stated in a paper from 2017 by J. M. Ash and S. Catoiu concerning relations between various notions of the Lipschitz property and differentiability in the Euclidean plane. We also provide an improved version of the main result of that paper. 
\end{abstract}

\maketitle
\section{Introduction and main results}

In the conclusion of \cite{ash} (Conjecture 7 therein) the authors suggest the following might hold (see the next section for the definitions):
\begin{conj}\label{domn}
Let $f:\R^2\to\R$ and $E=\bigl\{\vect{x}\in\R^2: f\text{~is Lipschitz at~}\vect{x} \text{~in every direction}\bigr\}$.
Then $$F=\bigl\{\vect{x}\in E: f\text{~is not Lipschitz at~}\vect{x} \text{~relative to~}E\bigr\}$$ is a null set.
\end{conj}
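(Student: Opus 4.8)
The plan is to reduce the statement to a purely geometric navigation problem inside a single small disk, exploiting a feature special to the plane: the directions along which $f$ fails to be directionally Lipschitz at a fixed point of $E$ form a set of arbitrarily small measure. First I would record the pointwise quantity $L(\vect{x},\vect{v})=\limsup_{t\to0}\abs{f(\vect{x}+t\vect{v})-f(\vect{x})}/\abs{t}$ and observe that, by the definition of $E$, every $\vect{x}\in E$ satisfies $L(\vect{x},\vect{v})<\infty$ for \emph{every} unit $\vect{v}$. Hence the "large" direction sets $\{\vect{v}:L(\vect{x},\vect{v})>N\}$ decrease to $\emptyset$ as $N\to\infty$, so their $1$-dimensional measure tends to $0$. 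Fixing $\varepsilon>0$, for each $\vect{x}\in E$ there are integers $N,m$ so that the directions $\vect{v}$ with $\abs{f(\vect{x}+t\vect{v})-f(\vect{x})}\le N\abs{t}$ for all $\abs{t}\le 1/m$ fill all but $\varepsilon$ of the circle. Sorting $E$ by the least such pair $(N,m)$ splits $E$ into countably many pieces, and since a countable union of null sets is null it suffices to fix one piece $S$ and show $F\cap S$ is null; on $S$, at \emph{every} point all but $\varepsilon$-measure of directions are \emph{good}, i.e. uniformly $N$-Lipschitz with radius $1/m$.

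Suppose toward a contradiction that $F\cap S$ is not null and let $\vect{x}_0$ be a point of $S$ at which $F\cap S$ has density $1$. Failure of relative Lipschitz continuity at $\vect{x}_0$ yields $\vect{y}_k\in E$ with $r_k:=\abs{\vect{y}_k-\vect{x}_0}\to0$ and $\abs{f(\vect{y}_k)-f(\vect{x}_0)}/r_k\to\infty$. The aim is to contradict this by producing, for each large $k$, a short polygonal path $\vect{x}_0=\vect{z}_0,\vect{z}_1,\dots,\vect{z}_p=\vect{y}_k$ lying in $D(\vect{x}_0,2r_k)$, whose vertices lie in $S$ (the terminal one being $\vect{y}_k\in E$), whose edges all have length at most $1/m$, and along each of which the good $N$-Lipschitz bound applies, so that telescoping yields
\[
\abs{f(\vect{y}_k)-f(\vect{x}_0)}\le N\sum_{i}\abs{\vect{z}_{i+1}-\vect{z}_i}.
\]
If the total length is kept comparable to $r_k$, the quotient is bounded by roughly $N$, contradicting the explosion of the slopes.

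The core difficulty is constructing this path. Since $r_k\to0$ the whole construction takes place in $D(\vect{x}_0,2r_k)$, a disk in which $S$ (hence $E$) has density close to $1$, so good points are abundant at every relevant scale while at each of them the forbidden directions occupy an arc of measure at most $\varepsilon$. I would build the vertices inductively, at each stage moving roughly toward $\vect{y}_k$ in steps of a fixed small length $\ell\approx r_k/p$: given the current vertex $\vect{z}_i\in S$, one must select $\vect{z}_{i+1}$ in a thin annular sector around the target direction so that, \emph{simultaneously}, the step direction is good \emph{for} $\vect{z}_i$ and $\vect{z}_{i+1}$ again lies in $S$. A measure/pigeonhole estimate --- the good directions at $\vect{z}_i$ miss only an $\varepsilon$-arc while $S$-points fill almost all of the sector --- is what guarantees such a choice while keeping each deflection small, so that the accumulated length stays within a constant factor of $r_k$. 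This simultaneous control is the main obstacle, and it is exactly here that two-dimensionality is used: directions form a circle on which the bad set is a small-measure arc one can steer around. I expect the terminal step, reaching $\vect{y}_k$ itself from a good direction using only $\vect{y}_k\in E$, to need separate care.

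A final delicate point is measurability: since $f$ is an \emph{arbitrary} function, the sets $E$, $F$ and the pieces $S$ need not be Lebesgue measurable, so the density-point step is not automatic. I would handle this with measurable hulls and outer density, arranging $\vect{x}_0$ to be a genuine point of $S$ while retaining density-$1$ abundance of $E$-points in small disks; checking that the navigation argument survives this replacement is the technical residue once the geometric core is in place.
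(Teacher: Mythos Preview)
Your approach is genuinely different from the paper's, and the point you flag as needing ``separate care'' --- the terminal step reaching $\vect y_k$ --- is a real gap that the sketch does not close. For the telescoped bound to contradict $\abs{f(\vect y_k)-f(\vect x_0)}/r_k\to\infty$, the last edge must contribute $O(r_k)$ with a constant independent of $k$. If you try to use goodness at the last intermediate vertex $\vect z_{p-1}\in S$, you need the \emph{single} direction $(\vect y_k-\vect z_{p-1})/\Abs{\vect y_k-\vect z_{p-1}}$ to lie in the $(N,m)$-good set of $\vect z_{p-1}$; knowing that all but an $\varepsilon$-measure set of directions is good at $\vect z_{p-1}$ says nothing about one prescribed direction, and no Fubini-type argument rules out that this direction is bad for every admissible $\vect z_{p-1}$. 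If instead you invoke $\vect y_k\in E$ for the final edge, the directional Lipschitz constant and radius at $\vect y_k$ depend on $k$ with no uniform control, so that edge contributes $L_k\cdot\Abs{\vect z_{p-1}-\vect y_k}$ with $L_k$ possibly unbounded. A related warning sign: nothing in your navigation is truly two-dimensional (steering around a small-measure bad set on the sphere of directions would make equal sense in every $\R^d$), yet the paper remarks that the higher-dimensional analogue is open.

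The paper avoids this obstacle altogether by working with Baire category rather than measure on $\Es{1}$. Since every direction is Lipschitz at $\vect x\in E$, one has $\Es{1}=\bigcup_n S_n$ and some $S_n$ is \emph{dense in an open arc}, yielding an open circular sector $C$ at $\vect x$ in which the good rays from $\vect x$ are topologically dense. For any $\vect y\in E\cap C$ one picks a direction $\vect u$ Lipschitz at $\vect y$ and not parallel to $\vect y-\vect x$; in the plane the line $\{\vect y+t\vect u\}$ meets those dense good rays at parameters $t_i\to 0$, and mere directional \emph{continuity} of $f$ at $\vect y$ along $\vect u$ (no quantitative constant needed) passes the bound $\abs{f(\vect y+t_i\vect u)-f(\vect x)}\le n\Abs{\vect y+t_i\vect u-\vect x}$ to the limit. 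Thus $f$ is Lipschitz at every $\vect x\in E$ relative to $E\cap C$ for some sector $C$, and a Zaj\'{\i}\v{c}ek-type lemma then shows $F$ is $\sigma$-porous, hence null. No density points, no polygonal paths, and no measurability hypotheses on $E$ or $F$ are needed.
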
 
\noindent 

In order to put this statement in context, consider the following heuristic principle (cf. also the discussion concluding the first section of \cite{ash}):
\begin{align}\label{heuristics}
\begin{split}
    &\text{If $f:\R^d\to\R$ is any function and $E$ denotes the set of points where $f$ is \emph{smooth}}~\text{in} \\&\text{\emph{many directions}, then $f$ is Lipschitz relative to $E$ on a~\emph{large subset} of $E$ (or $\R^d$).}
\end{split}
    \end{align}
The main result of \cite{ash}, Theorem 4, disproves a variant of this principle with $d=2$ and the emphasized parts replaced by \emph{differentiable}, \emph{almost every direction} and \emph{a subset of $E$ of full measure}, respectively. Conjecture \ref{domn} then concerns with another precise version of \eqref{heuristics}.

\medskip
This paper contributes to investigation of the principle \eqref{heuristics} in two ways. On one hand, we confirm Conjecture \ref{domn} in a stronger form. Namely, we prove 
\begin{thm}\label{thm1}
Let $f:\R^2\to\R$ and define $$E=\bigl\{\vect{x}\in\R^2: f\text{~is Lipschitz at~}\vect{x} \text{~in residual many directions}\bigr\}.$$ 
Then the set
$$F=\bigl\{\vect{x}\in E: f\text{~is not Lipschitz at~} \vect{x} \text{~relative to~}E\bigr\}$$ is $\sigma$-porous.
\end{thm}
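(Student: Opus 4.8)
The plan is to combine a Baire-category decomposition of $E$ with a single geometric ``routing'' estimate, and then to read off porosity directly from the witnesses to the failure of the relative Lipschitz property.

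First I would decompose $E$. For $\vect{x}\in E$ the set $D_{\vect{x}}$ of directions in which $f$ is Lipschitz at $\vect{x}$ is residual, hence non-meager, in $\Es{1}$. Writing the directional Lipschitz condition in quantitative form, $D_{\vect{x}}=\bigcup_{n}D^n_{\vect{x}}$ with
$$D^n_{\vect{x}}=\Bigl\{\vect{u}\in\Es{1}:\ \abs{f(\vect{x}+t\vect{u})-f(\vect{x})}\le n\abs{t}\ \text{ for all }0<\abs{t}\le 1/n\Bigr\}.$$
Since a countable union of nowhere dense sets is meager, at least one $D^n_{\vect{x}}$ must be somewhere dense, i.e.\ dense in some open arc. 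Refining that arc to a countable base $\mathcal{I}$ of arcs with rational endpoints (and length in $(0,\pi/2)$, say), I obtain $E=\bigcup_{n\in\N,\,I\in\mathcal{I}}E_{n,I}$, where $E_{n,I}=\{\vect{x}\in E: D^n_{\vect{x}}\text{ is dense in }I\}$. This step uses no regularity of $f$ at all.

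The technical core is a routing lemma: if $\vect{a}\in E_{n,I}$ and $\vect{b}\in E_{m,J}$ with $\abs{\vect{a}-\vect{b}}$ small, then $\abs{f(\vect{a})-f(\vect{b})}\le C\abs{\vect{a}-\vect{b}}$ with $C=C(n,m,I,J)$. To prove it I would pick $\vect{u}_1\in D^n_{\vect{a}}\cap I$ and $\vect{u}_2\in D^m_{\vect{b}}\cap J$ making an angle (mod $\pi$) bounded below by some $\theta_0=\theta_0(I,J)>0$; such a transverse pair exists because $I,J$ have positive length, and density of $D^n_{\vect{a}},D^m_{\vect{b}}$ lets me realise it. The lines $\vect{a}+\R\vect{u}_1$ and $\vect{b}+\R\vect{u}_2$ meet in a single point $\vect{z}$, and solving $\vect{b}-\vect{a}=\alpha\vect{u}_1-\beta\vect{u}_2$ by Cramer's rule gives $\abs{\vect{z}-\vect{a}},\abs{\vect{z}-\vect{b}}\le\abs{\vect{a}-\vect{b}}/\sin\theta_0$, \emph{uniformly in the relative position of $\vect{a}$ and $\vect{b}$}. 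Since $\vect{z}-\vect{a}$ points along $\vect{u}_1\in D^n_{\vect{a}}$ and $\vect{z}-\vect{b}$ along $\vect{u}_2\in D^m_{\vect{b}}$, the directional bounds yield
$$\abs{f(\vect{a})-f(\vect{b})}\le n\abs{\vect{z}-\vect{a}}+m\abs{\vect{z}-\vect{b}}\le\frac{n+m}{\sin\theta_0}\,\abs{\vect{a}-\vect{b}}$$
once $\abs{\vect{a}-\vect{b}}$ is small enough that both legs have length $\le\min(1/n,1/m)$. The special case $(m,J)=(n,I)$ shows that $f$ restricted to each $E_{n,I}$ is locally Lipschitz with a constant $K=K(n,I)$.

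Finally I extract porosity. Take $\vect{x}\in F$, say $\vect{x}\in E_{n,I}$, with witnesses $\vect{y}_k\in E$, $\vect{y}_k\to\vect{x}$, and $Q_k:=\abs{f(\vect{y}_k)-f(\vect{x})}/\abs{\vect{y}_k-\vect{x}}\to\infty$. By the local Lipschitz property the $\vect{y}_k$ eventually leave $E_{n,I}$; each lies in some $E_{m,J}$, and by pigeonhole over the countable index set I may assume all lie in one fixed $E_{m,J}$. Writing $d_k=\abs{\vect{y}_k-\vect{x}}$, I claim $B(\vect{y}_k,\tfrac12 d_k)$ meets no point of $E_{n,I}$ for large $k$: if $\vect{w}\in E_{n,I}\cap B(\vect{y}_k,\tfrac12 d_k)$, the local Lipschitz bound at $\vect{x}$ gives $\abs{f(\vect{w})-f(\vect{x})}\le\tfrac32 K d_k$, hence $\abs{f(\vect{w})-f(\vect{y}_k)}\ge(Q_k-\tfrac32 K)d_k$, while the routing lemma for $\vect{w}\in E_{n,I}$, $\vect{y}_k\in E_{m,J}$ gives $\abs{f(\vect{w})-f(\vect{y}_k)}\le C(n,m,I,J)\cdot\tfrac12 d_k$; for $Q_k$ large these contradict. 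As $B(\vect{y}_k,\tfrac12 d_k)\subseteq B(\vect{x},2 d_k)$ with radius ratio $\tfrac14$, these balls witness porosity of $E_{n,I}$, hence of its subset $P_{n,I,m,J}:=\{\vect{x}\in E_{n,I}:\text{admitting witnesses in }E_{m,J}\}$, at $\vect{x}$ with constant $\tfrac14$ at the scales $2d_k\to 0$. Thus each $P_{n,I,m,J}$ is porous, and since $F\subseteq\bigcup_{n,I,m,J}P_{n,I,m,J}$ is a countable union, $F$ is $\sigma$-porous. The genuinely delicate point is the routing lemma, and specifically the uniform bound $\abs{\vect{z}-\vect{a}}\le\abs{\vect{a}-\vect{b}}/\sin\theta_0$ independent of the direction of $\vect{a}-\vect{b}$: it is what allows a merely \emph{dense} (rather than open or full-measure) set of good directions to control $f$ between nearby points, and is precisely why residually many directions suffice. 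The rest is bookkeeping of the scales on which each directional and relative bound is valid.
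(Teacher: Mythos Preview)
Your argument is correct (modulo one harmless slip: with the two–sided definition of $D^n_{\vect{x}}$ you wrote, $\bigcup_n D^n_{\vect{x}}$ equals $D_{\vect{x}}\cap(-D_{\vect{x}})$ rather than $D_{\vect{x}}$; this intersection is still residual, so the Baire step goes through unchanged). However, the route is genuinely different from the paper's.

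The paper does not decompose $E$ symmetrically. Instead, for each $\vect{x}\in E$ it finds, via Baire, a single sector $C=C(\vect{x},\vect{v},\delta,1/n)$ in which the good rays from $\vect{x}$ are dense, and then shows directly that $\abs{f(\vect{y})-f(\vect{x})}\le n\Abs{\vect{y}-\vect{x}}$ for \emph{every} $\vect{y}\in E\cap C$, with constant $n$ not depending on the ``type'' of $\vect{y}$. The trick is asymmetric: one picks any Lipschitz direction $\vect{u}$ at $\vect{y}$ not parallel to $\vect{y}-\vect{x}$, observes that the line $\vect{y}+\R\vect{u}$ meets the dense family of good rays from $\vect{x}$ in points $\vect{y}+t_i\vect{u}\to\vect{y}$, and passes to the limit using only directional \emph{continuity} at $\vect{y}$. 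This places every $\vect{x}\in E$ in the hypothesis of a known $\sigma$-porosity lemma of Zaj\'{\i}\v{c}ek (Lemma~\ref{zajicek_prop} here), which then finishes the proof.

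Your routing lemma, by contrast, is symmetric: it uses the full quantitative ray structure at both endpoints and produces a constant $C(n,m,I,J)$ depending on the classes of both points. This is why you need the pigeonhole step on the witnesses $\vect{y}_k$ and the further stratification into sets $P_{n,I,m,J}$, and why you must redo the porosity argument by hand rather than quoting a lemma. What you gain is self-containment; what the paper gains is a shorter argument with a sharper sector estimate and all the porosity bookkeeping outsourced to Lemma~\ref{zajicek_prop}. Your direct porosity step is, in effect, a tailored instance of that lemma's proof.
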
 

On the other hand, we have
\begin{thm}\label{thm2}
There exist sets $Z\subset\R^2$ and $E\subset\R^2$ such that $\abs{\R^2\setminus E}=0$ and $\chi_Z$ (the characteristic function of $Z$) is at each point of $E$ differentiable in almost every direction, but $\chi_Z$ is nowhere Lipschitz relative to $E$.
\end{thm}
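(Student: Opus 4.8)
The plan is to construct the set $Z$ explicitly and then take $E$ to be a full-measure set of sufficiently regular points; throughout I identify a direction with a point $\vect{u}\in\Es{1}$. First I would record the two elementary reductions that drive the construction. Since $\chi_Z$ takes only the values $0$ and $1$, the one-variable function $t\mapsto\chi_Z(\vect{x}+t\vect{u})$ is differentiable at $t=0$ precisely when it is constant on some interval $(-\delta,\delta)$ (and then the directional derivative is $0$); otherwise the difference quotient is unbounded. Hence $\chi_Z$ is differentiable at $\vect{x}$ in direction $\vect{u}$ iff the set $W_{\vect{x}}:=\R^2\setminus Z$ (when $\vect{x}\in Z$), resp. $W_{\vect{x}}:=Z$ (when $\vect{x}\notin Z$), does not meet the line through $\vect{x}$ in direction $\vect{u}$ arbitrarily close to $\vect{x}$. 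Writing
$$\Theta_\epsilon(\vect{x})=\bigl\{\vect{u}\in\Es{1}:\ \exists\,t,\ 0<\abs{t}<\epsilon,\ \vect{x}+t\vect{u}\in W_{\vect{x}}\bigr\},$$
the bad directions at $\vect{x}$ form $\bigcap_{\epsilon>0}\Theta_\epsilon(\vect{x})$, so $\chi_Z$ is differentiable at $\vect{x}$ in almost every direction as soon as $\abs{\Theta_\epsilon(\vect{x})}\to0$ as $\epsilon\to0$. For the Lipschitz side, $\chi_Z$ fails to be Lipschitz at $\vect{x}$ relative to $E$ exactly when there are $\vect{y}\in E$ with $\vect{y}\to\vect{x}$ and $\chi_Z(\vect{y})\neq\chi_Z(\vect{x})$; thus if $E$ has full measure and every ball $B$ satisfies $0<\abs{Z\cap B}<\abs{B}$, then $E$ meets both $Z$ and $\R^2\setminus Z$ in every ball and $\chi_Z$ is nowhere Lipschitz relative to $E$.

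These reductions isolate the real task: to build a measurable $Z$ with $0<\abs{Z\cap B}<\abs{B}$ for every ball $B$ (so $Z$ and its complement are dense with empty interior) and with $\abs{\Theta_\epsilon(\vect{x})}\to0$ for almost every $\vect{x}$. I would produce $Z$ by a multiscale, self-similar procedure. Fix a generating pattern in the unit cell that places a fixed proportion of area in $Z$ and in its complement and whose interface is a union of straight segments; then refine the pattern inside every cell of side $2^{-k}$ by a rescaled copy rotated through an angle $\theta_k$, where $\{\theta_k\}$ is chosen incommensurate with $\pi$ so that the orientations equidistribute on $\Es{1}$, and tile this over $\R^2$ at every scale. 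Keeping the proportions fixed at each stage forces $0<\abs{Z\cap B}<\abs{B}$ for all $B$, while the density of cells of every size makes both $Z$ and $\R^2\setminus Z$ dense everywhere. The point of the rotations is that the resulting measure-theoretic interface accumulates tangent directions equidistributed over $\Es{1}$, hence is purely $1$-unrectifiable.

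The crux is the estimate $\abs{\Theta_\epsilon(\vect{x})}\to0$ for almost every $\vect{x}$, a radial-projection statement asserting that almost every viewpoint $\vect{x}$ sees the minority part of $Z$ near $\vect{x}$ within a vanishing set of directions; I expect this to be the main obstacle. The approach I would take is scale by scale: at scale $2^{-k}$ the portion of $W_{\vect{x}}$ at distance comparable to $2^{-k}$ consists of pieces of diameter $\lesssim 2^{-k}$ carried on segments of direction $\theta_k$, and a piece at distance $d$ contributes angular width $\lesssim(\text{its radial thickness})/d$; because the $\theta_k$ equidistribute, for a fixed $\vect{x}$ only a small proportion of scales present pieces seen ``broadside,'' so averaging the angular contribution over $\vect{x}$ (Fubini) yields a summable bound. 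A Borel--Cantelli argument then gives $\abs{\Theta_\epsilon(\vect{x})}\to0$ for almost every $\vect{x}$; alternatively, one recognises this as a radial-projection phenomenon for purely unrectifiable sets, in the spirit of the Besicovitch--Federer projection theorem.

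Finally I would assemble the pieces. Let $E$ be the full-measure set of points $\vect{x}$ at which $\abs{\Theta_\epsilon(\vect{x})}\to0$ (full measure by the crux; one may further intersect with the Lebesgue density points of $Z$ and of $\R^2\setminus Z$ for cleanliness). By the first reduction $\chi_Z$ is differentiable at each point of $E$ in almost every direction, and since $0<\abs{Z\cap B}<\abs{B}$ for every ball, $E$ meets both types in every ball, so $\chi_Z$ is nowhere Lipschitz relative to $E$; as $\abs{\R^2\setminus E}=0$, this is precisely Theorem~\ref{thm2}.
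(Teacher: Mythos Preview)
Your opening reductions are correct and efficient: for a $\{0,1\}$-valued function, directional differentiability at $\vect{x}$ is equivalent to local constancy along the line, and non-Lipschitzness relative to $E$ follows once $E$ meets both colours in every ball. The difficulty lies entirely in the construction of $Z$ and in the angular estimate $\abs{\Theta_\epsilon(\vect{x})}\to 0$, and here there is a genuine gap.

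First, the construction is not well defined. You place a rotated copy of a fixed pattern in every dyadic cell at every scale, but the patterns at different scales overwrite one another and you never say how the limit set $Z$ is actually formed (limsup, liminf, alternating, \dots). Until this is specified one can verify neither $0<\abs{Z\cap B}<\abs{B}$ nor the angular claim; and if ``fixed proportion at every stage'' were to mean $\abs{Z\cap Q}=p\abs{Q}$ for all dyadic cells $Q$, that would contradict the Lebesgue density theorem outright.

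Second, the heuristic for the crux does not hold up. The angle a piece subtends from a point at distance $d$ is governed by its \emph{transverse} extent, not its radial thickness, so rotating the pattern does not shrink it; a region of diameter $\sim 2^{-k}$ at distance $\sim 2^{-k}$ always subtends angle of order one. Since your design puts a fixed fraction of each annulus in the wrong colour, Fubini in polar coordinates about $\vect{x}$ gives a uniform lower bound on $\abs{\Theta_\epsilon(\vect{x})}$, and no summable series for Borel--Cantelli is available. The equidistribution of $\theta_k$ cannot help either: at scale $k$ the pieces occupy positions all around the annulus, so the line of sight from $\vect{x}$ meets them at every angle, not predominantly ``end-on''. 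The Besicovitch--Federer alternative is likewise misplaced: that theorem treats purely unrectifiable sets of \emph{finite} $\mathcal{H}^1$ measure, whereas $\partial Z$ here is dense with infinite length, and in any case the set whose radial projection you need to control is the two-dimensional $W_{\vect{x}}$, not a one-dimensional interface.

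The paper resolves the tension by abandoning fixed proportions. It builds $Z$ from an alternating hierarchy of nested disc systems with $\bigcup\mathcal{D}_1\supset\bigcup\mathcal{D}_2\supset\cdots$, where at each level the radii decay like $4^{-i}$. Then almost every $\vect{x}$ lies outside the enlarged discs $\overline{2^i*B_i}$ for all large $i$, so each remaining disc subtends angle $\lesssim 2^{-i}$ from $\vect{x}$; this series is summable and Borel--Cantelli on $\Es{1}$ yields directional differentiability. Density of the disc systems together with the positive-measure remainder $B\setminus\bigcup\mathcal{S}(B)$ at every level produces both colours of $E$ inside every ball, and your own reduction then gives the nowhere-Lipschitz conclusion.
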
 
This construction refines Theorem 4 from \cite{ash} and, consequently, further restricts the extent to which the principle \eqref{heuristics} can possibly be valid. 

\subsection{Preliminaries}

Let us start with some useful notation conventions and necessary definitions.

\begin{notat}
The Euclidean norm on $\R^d$ will be denoted by $\Abs{\cdot}$ and the corresponding unit sphere by $\Es{d-1}$. For $\vect{x}\in\R^d$ and $r>0$ we write $B(\vect{x},r)$ for the open ball centered at $\vect{x}$ with radius $r$. For $k>0$ we define $k*B(\vect{x},r)=B(\vect{x},kr)$. When $A\subset X$ is measurable with respect to an implicitly given measure $\mu$, we write $\abs{A}=\mu(A)$. If $\mathcal{A}$ is a countable system of sets we write 
\begin{equation*}
    \bigcup \mathcal{A}=\{x:x\in A \text{~for some~}A\in\mathcal{A}\}\text{~and~}\limsup \mathcal{A}=\{x:x\in A \text{~for infinitely many~}A\in\mathcal{A}\}.
\end{equation*}
\end{notat}

\begin{df}[$\sigma$-porosity in $\R^d$]
We say that a set $A\subset\R^d$ is \emph{porous at} $\vect{x}\in\R^d$, if there is $0<p<1$ such that for every $\varepsilon>0$ there exists $\vect{y}\in B(\vect{x},\varepsilon)$ such that $\vect{x}\neq\vect{y}$ and $B(\vect{y},p\!\Abs{\vect{x}-\vect{y}})\cap A=\emptyset$. If $A$ is porous at $\vect{x}$ for all $\vect{x}\in A$ we say that $A$ is \emph{porous}. A subset of $\R^d$ is called \emph{$\sigma$-porous} if it is a countable union of porous sets.
\end{df}

\begin{df}[Directional and relative Lipschitz property]\label{dir_lip}
Let $G\subset\R^d$ be an open set, ${f:G\to\R}$ a function and $\vect{u}\in \mathbb{S}^{d-1}$. We say that $f$ is \emph{Lipschitz at $\vect{x}\in G$ in direction $\vect{u}$} if  $$\limsup_{h\to0+} \frac{\abs{f(\vect{x}+h\vect{u})-f(\vect{x})}}{h}<\infty.$$ 

\noindent We say that $f$ is \emph{Lipschitz} at $\vect{x}$ \emph{in residual many directions} (resp. \emph{in almost every direction}) if the set $$\left\{u\in\Es{d-1}:f \text{~is not Lipschitz at~}\vect{x}\text{~in direction~}\vect{u}\right\}$$ is of the first Baire category (resp. of zero Hausdorff measure $\mathcal{H}^{d-1}$) in $\Es{d-1}$.

\medskip
\noindent Differentiability at $\vect{x}\in G$ in direction $\vect{u}\in \Es{d-1}$ and in almost every direction is defined analogously.

\medskip
\noindent We say that $f$ is \emph{Lipschitz} at $\vect{x}\in G$ \emph{relative to} $M\subset G$ if $\vect{x}$ is not a limit point of $M$ or 
$$\limsup_{\vect{y}\to\vect{x},\,\vect{y}\in M}\frac{\abs{f(\vect{y})-f(\vect{x})}}{\Abs{\vect{y}-\vect{x}}}<\infty.$$ 
\end{df}

\begin{df}[Circular sector]\label{kuzel}
For  $d\geq 2$, $\vect{x}\in\R^d$, $\vect{u}\in\mathbb{S}^{d-1}$ and $\delta,r>0$ we define an \emph{open circular sector} with vertex $\vect{x}$, axis $\vect{u}$, opening parameter $\delta$ and radius $r$ by $$C(\vect{x},\vect{u},\delta,r)=\left\{\vect{x}\neq\vect{y}\in\R^d:\Abs{\frac{\vect{y}-\vect{x}}{\Abs{\vect{y}-\vect{x}}}-\vect{u}}<\delta\right\}\cap B(\vect{x},r).$$ 
\end{df}

\section{Proof of the positive result}

The proof of Theorem \ref{thm1} relies on the following variant of Proposition 3.1. from \cite{zajicek}.

\begin{lemma}\label{zajicek_prop}
Let $d\geq2$, $f:\R^d\to\R$ be any function and $M\subset \R^d$ any set. Let $A$ be the set of all points $\vect{x} \in M$ for which there exists an open circular sector $C=C(\vect{x},\vect{u},\delta,r)$ such that $f$ is Lipschitz at $\vect{x}$ relative to $M\cap C$ but $f$ is not Lipschitz at $\vect{x}$ relative to $M$. Then $A$ is $\sigma$-porous.
\end{lemma}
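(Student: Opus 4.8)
The plan is to decompose $A$ into countably many porous pieces. For each $\vect{x}\in A$ the defining sector $C(\vect{x},\vect{u},\delta,r)$ and the local relative Lipschitz bound furnish, after shrinking the radius, a constant $L$ and a radius $r$ with $\abs{f(\vect{q})-f(\vect{x})}\le L\Abs{\vect{q}-\vect{x}}$ for all $\vect{q}\in M\cap C(\vect{x},\vect{u},\delta,r)$. Replacing $\vect{u}$ by an axis $\vect{u}_i$ from a fixed countable dense subset of $\Es{d-1}$ (which shrinks the sector, preserving the bound) and rationalising $\delta,r$ and $L$, I obtain a countable family of parameter tuples $\alpha=(\vect{u}_i,\delta,r,L)$ with $A=\bigcup_\alpha A_\alpha$, where $A_\alpha$ is the set of $\vect{x}\in A$ for which (i) $\abs{f(\vect{q})-f(\vect{x})}\le L\Abs{\vect{q}-\vect{x}}$ holds for every $\vect{q}\in M\cap C(\vect{x},\vect{u}_i,\delta,r)$, and (ii) $\vect{x}$ is a limit point of $M$ with $\limsup_{\vect{q}\to\vect{x},\,\vect{q}\in M}\abs{f(\vect{q})-f(\vect{x})}/\Abs{\vect{q}-\vect{x}}=\infty$ (which is precisely the failure of relative Lipschitzness). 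It then suffices to show that each $A_\alpha$ is porous.

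Fix $\alpha=(\vect{u},\delta,r,L)$ (writing $\vect{u}=\vect{u}_i$) and $\vect{x}\in A_\alpha$, and let $\varepsilon>0$. Using (ii) I choose $\vect{y}\in M$ with $s:=\Abs{\vect{y}-\vect{x}}$ as small as I please and $\abs{f(\vect{y})-f(\vect{x})}>Ks$, where $K$ is a large constant fixed below. By (i) every point of the sector has quotient at most $L$, so for $K>L$ and $s<r$ the bad point $\vect{y}$ must lie \emph{outside} $C(\vect{x},\vect{u},\delta,r)$; that is, $\vect{v}:=(\vect{y}-\vect{x})/s$ satisfies $\Abs{\vect{v}-\vect{u}}\ge\delta$. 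The hole I forbid is the ball $B(\vect{w},\beta t)$ placed behind $\vect{x}$ along $-\vect{u}$, namely $\vect{w}=\vect{x}-t\vect{u}$ with $t=Ns$; here $N\in\N$ and $\beta\in(0,1)$ will be chosen depending only on $\delta$.

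The purpose of this placement is that, for $N$ large and $\beta$ small, \emph{every} $\vect{z}\in B(\vect{w},\beta t)$ sees both $\vect{x}$ and $\vect{y}$ inside its own sector $C(\vect{z},\vect{u},\delta,r)$. Indeed $\vect{x}-\vect{z}=t\vect{u}+\vect{e}$ with $\Abs{\vect{e}}\le\beta t$, while $\vect{y}-\vect{z}=t\vect{u}+\vect{e}'$ with $\Abs{\vect{e}'}\le s+\beta t=t(1/N+\beta)$, and the elementary estimate $\Abs{\tfrac{\vect{a}}{\Abs{\vect{a}}}-\vect{u}}\le 2\Abs{\vect{a}-t\vect{u}}/\Abs{\vect{a}}$ shows that both directions fall within $\delta$ of $\vect{u}$ as soon as $1/N+\beta$ is small enough (taking, say, $\beta=\delta/16$ and $N>16/\delta$); the radius condition is met since all these distances are $\le 2Ns<r$ for $s$ small. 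Now suppose $\vect{z}\in A_\alpha\cap B(\vect{w},\beta t)$. Since $\vect{x},\vect{y}\in M$ (recall $A_\alpha\subset M$), property (i) applied \emph{at} $\vect{z}$ gives $\abs{f(\vect{x})-f(\vect{z})}\le L\Abs{\vect{x}-\vect{z}}\le 2Lt$ and $\abs{f(\vect{y})-f(\vect{z})}\le 2Lt$, whence $\abs{f(\vect{y})-f(\vect{x})}\le 4Lt=4LNs$. Choosing $K=4LN+1$ contradicts $\abs{f(\vect{y})-f(\vect{x})}>Ks$. Hence $B(\vect{w},\beta t)\cap A_\alpha=\emptyset$, and since $\Abs{\vect{w}-\vect{x}}=t=Ns<\varepsilon$ (ensured by taking $s<\varepsilon/N$) this witnesses porosity of $A_\alpha$ at $\vect{x}$ with the uniform constant $p=\beta$. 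A countable union of porous sets is $\sigma$-porous, so $A$ is $\sigma$-porous.

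I expect the main obstacle to be calibrating the two competing scales. Pushing the hole far back (large $N$) is exactly what drags the bad point $\vect{y}$ — whose direction $\vect{v}$ from $\vect{x}$ is a priori an arbitrary direction at angular distance $\ge\delta$ from $\vect{u}$ — into the narrow sector $C(\vect{z},\vect{u},\delta,r)$; but the same largeness degrades the available Lipschitz bound to order $Lt=LNs$, so the argument closes only because $N$ is dictated solely by the fixed aperture $\delta$, which leaves $K$ free to exceed $4LN$ by virtue of the unbounded quotient in (ii). Making the angular estimates uniform over all $\vect{z}$ in the hole, and thereby obtaining a porosity constant $\beta$ independent of $\vect{x}$, is the delicate point; everything else is the routine quantisation of the sector parameters carried out in the first step.
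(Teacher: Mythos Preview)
Your proof is correct and follows essentially the same route as the paper's: both quantise the sector parameters (axis from a countable dense set, rational aperture/radius/Lipschitz bound) to obtain a countable cover $A=\bigcup_\alpha A_\alpha$, then for each $\vect{x}\in A_\alpha$ place the porosity hole at $\vect{x}-t\vect{u}$ for $t$ a large fixed multiple of $s=\Abs{\vect{y}-\vect{x}}$, and derive a contradiction from the triangle inequality by applying the sector Lipschitz bound at a hypothetical $\vect{z}\in A_\alpha$ in the hole to both $\vect{x}$ and $\vect{y}$. Your constants ($N>16/\delta$, $\beta=\delta/16$, $K=4LN+1$) differ cosmetically from the paper's ($6m$, $r_i$, $k(12m+4)$), and your remark that $\vect{y}$ must lie outside the sector is true but not actually used; otherwise the arguments coincide.
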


Before we provide the proof (a slight modification of that given in \cite{zajicek}), let us recall the following elementary fact (see e.g. Lemma~5.1 in \cite{fact}).

\begin{prop}\label{triangle}
If $\vect{u}$, $\vect{v}\in\R^d\setminus\{\vect{0}\}$, then $\Abs{\frac{\vect{u}}{\Abs{\vect{u}}}-\frac{\vect{v}}{\Abs{\vect{v}}}}\leq \frac{2}{\Abs{\vect{v}}}\Abs{\vect{u}-\vect{v}}$.
\end{prop}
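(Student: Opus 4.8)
The plan is to prove this elementary inequality directly, by inserting a single well-chosen intermediate vector and applying the triangle inequality. The crucial observation is that the stated bound is \emph{asymmetric}: the denominator on the right is $\Abs{\vect{v}}$ rather than $\Abs{\vect{u}}$. Consequently the intermediate vector must be $\vect{u}/\Abs{\vect{v}}$ (the numerator $\vect{u}$ scaled by the norm of $\vect{v}$), and not $\vect{v}/\Abs{\vect{u}}$; the latter choice would, by the same argument, yield the analogous estimate with $\Abs{\vect{u}}$ in the denominator instead.

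Concretely, I would first write the decomposition
$$\frac{\vect{u}}{\Abs{\vect{u}}}-\frac{\vect{v}}{\Abs{\vect{v}}}=\left(\frac{\vect{u}}{\Abs{\vect{u}}}-\frac{\vect{u}}{\Abs{\vect{v}}}\right)+\left(\frac{\vect{u}}{\Abs{\vect{v}}}-\frac{\vect{v}}{\Abs{\vect{v}}}\right),$$
and bound the norm of the left-hand side by the sum of the norms of the two bracketed terms. The second bracket is handled instantly: its norm is $\Abs{\vect{u}-\vect{v}}/\Abs{\vect{v}}$, since $\Abs{\vect{v}}$ is a scalar. For the first bracket, I would factor out $\vect{u}$, obtaining $\Abs{\vect{u}}\cdot\abs{\tfrac{1}{\Abs{\vect{u}}}-\tfrac{1}{\Abs{\vect{v}}}}$, which simplifies to $\abs{\Abs{\vect{v}}-\Abs{\vect{u}}}/\Abs{\vect{v}}$.

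The remaining input is the reverse triangle inequality $\abs{\Abs{\vect{v}}-\Abs{\vect{u}}}\le\Abs{\vect{u}-\vect{v}}$, which bounds the first bracket by $\Abs{\vect{u}-\vect{v}}/\Abs{\vect{v}}$ as well. Summing the two contributions gives exactly $\frac{2}{\Abs{\vect{v}}}\Abs{\vect{u}-\vect{v}}$, completing the proof. I do not expect any genuine obstacle here, as the computation is wholly elementary; the only points deserving a moment's attention are the asymmetric choice of intermediate vector flagged above and the (standard) use of the reverse triangle inequality to pass from the difference of norms to the norm of the difference.
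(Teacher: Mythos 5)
Your proof is correct and complete. Note that the paper itself does not prove this proposition at all — it simply cites Lemma~5.1 of Massera and Sch\"affer \cite{fact} — so your elementary argument, with the correctly chosen asymmetric intermediate vector $\vect{u}/\Abs{\vect{v}}$ and the reverse triangle inequality, supplies exactly the standard verification the paper leaves to the reference, and both steps check out: the second bracket contributes $\Abs{\vect{u}-\vect{v}}/\Abs{\vect{v}}$ directly, and the first reduces to $\bigabs{\Abs{\vect{v}}-\Abs{\vect{u}}}/\Abs{\vect{v}}\leq\Abs{\vect{u}-\vect{v}}/\Abs{\vect{v}}$ as you say.
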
  

\begin{proof}[Proof of Lemma \ref{zajicek_prop}]
    Let $\{\vect{v}_n: n \in \N\}$ be a dense subset of $\Es{d-1}$. For positive integers $k$, $p$, $n$,~$m$ we denote
     by $A_{k,p,n,m}$ the set of all points $\vect{x} \in A$ such that
     \begin{equation}\label{trij}
     \frac{\abs{f(\vect{y})-f(\vect{x})}}{\Abs{\vect{y}-\vect{x}}} < k\ \ \text{whenever}\ \ 0< \Abs{\vect{y}-\vect{x}}< \frac{1}{p},\ \vect{y}\in M \ \text{and}\ \ 
      \left\| \frac{\vect y-\vect x}{\Abs{\vect{y}-\vect{x}}}- \vect{v}_n\right\| < \frac{1}{m}.
     \end{equation}
     Since $A$ is clearly the (countable) union of all sets $A_{k,p,n,m}$, it is sufficient to prove that for given $k$, $p$, $n$, $m\in\N$  the set
    $A_{k,p,n,m}$ is porous at any $\vect{x} \in A_{k,p,n,m}$. To this end, find a sequence $\vect{y}_i \xrightarrow{i\to\infty} \vect{x}$ such that for each
      $i \in \N$ we have $\vect{y}_i\neq\vect{x}$, $\vect{y}_i\in M$ and 
     \begin{equation}\label{star}
      \frac{\abs{f(\vect{y}_i)-f(\vect{x})}}{\Abs{\vect{y}_i-\vect{x}}} > k(12 m +4)
      \end{equation} 
       Set  $r_i= \Abs{\vect{y}_i-\vect{x}}$ and $\vect{x}_i= \vect{x}- 6mr_i\vect{v}_n$. It is sufficient to prove that there exists
     $i_0 \in \N$ such that
     \begin{equation}\label{prmn}
      B(\vect{x}_i,r_i) \cap A_{k,p,n,m} = \emptyset\ \ \ \text{for each}\ \ \ i \geq i_0.
     \end{equation}
     Consider $\vect{z}_{i} \in  B(\vect{x}_i,r_i) \cap A_{k,p,n,m}$ for some $i\in\N$ and observe that
     \begin{equation*}
     6mr_i -r_i \leq \Abs{\vect{x}-\vect{x}_i}- \Abs{\vect{x}_i-\vect{z}_i} \leq \Abs{\vect{x}-\vect{z}_i} \leq \Abs{\vect{x}-\vect{x}_i}+ \Abs{\vect{x}_i-\vect{z}_i} \leq 6mr_i +r_i
\end{equation*} and thus
\begin{equation*}
    6mr_i -2r_i \leq\Abs{\vect{x}-\vect{z}_i} -r_i \leq \Abs{\vect{y}_i-\vect{z}_i}  \leq\Abs{\vect{x}-\vect{z}_i}+r_i\leq 6mr_i +2r_i.
\end{equation*}
     These inequalities imply that there exists $i_0 \in \N$, which we fix, such that
     \begin{equation}\label{jlm}
     0<  \Abs{\vect{x}-\vect{z}_i} < \frac{1}{p}\ \ \text{and}\ \ 0<  \Abs{\vect{y}_i-\vect{z}_i} < \frac{1}{p}\quad \text{for every}\ \  i \geq i_0.
     \end{equation}
     Proposition \ref{triangle} yields
     \begin{equation}\label{nabla}
          \left\| \frac{\vect{x}-\vect{z}_i}{\|\vect{x}-\vect{z}_i\|}- \vect{v}_n\right\|= \left\| \frac{\vect{x}-\vect{z}_i}{\|\vect{x}-\vect{z}_i\|}-\frac{\vect{x}-\vect{x}_i}{\|\vect{x}-\vect{x}_i\|}\right\|
      \leq 2\frac{\|\vect{x}_i-\vect{z}_i\|}{\|\vect{x}-\vect{x}_i\|} 
      < \frac{2 r_i}{6mr_i} < \frac{1}{m} 
          \end{equation}
     and analogously
     \begin{equation}\label{ctyr}
     \left\| \frac{\vect{y}_i-\vect{z}_i}{\|\vect{y}_i-\vect{z}_i\|}- \vect{v}_n\right\| = \left\| \frac{\vect{y}_i-\vect{z}_i}{\|\vect{y}_i-\vect{z}_i\|}-\frac{\vect{x}-\vect{x}_i}{\|\vect{x}-\vect{x}_i\|}\right\| \leq 2 \frac{\|\vect{y}_i-\vect{x}\| +\|\vect{x}_i-\vect{z}_i\|}{\|\vect{x}-\vect{x}_i\|}
      < \frac{4r_i}{6mr_i} < \frac{1}{m}.
     \end{equation}
     Since $\vect{x}$, $\vect{y}_i\in M$, conditions \eqref{trij}, \eqref{jlm}, \eqref{nabla} and \eqref{ctyr} imply that if $i \geq i_0$, then
     $$ \frac{\abs{f(\vect{x})-f(\vect{z}_i)}}{\|\vect{x}-\vect{z}_i\|} < k\ \ \ \text{and}\ \ \  \frac{\abs{f(\vect{y}_i)-f(\vect{z}_i)}}{\|\vect{y}_i-\vect{z}_i\|} < k.$$
     Using also inequality \eqref{star}, we obtain for every $i \geq i_0$ that
     \begin{align*}
     r_i k (12m+4) &<  \abs{f(\vect{x})-f(\vect{y}_i)} \leq      \abs{f(\vect{x})-f(\vect{z}_i)} + \abs{f(\vect{y}_i)-f(\vect{z}_i)} \\
     &< k \|\vect{x}-\vect{z}_i\| + k \|\vect{y}_i-\vect{z}_i\| \leq k (6mr_i +r_i +6mr_i +2r_i),
         \end{align*}
     which is impossible. We have thus proved \eqref{prmn}.
 \end{proof}
 
\noindent The proof of Theorem~\ref{thm1} now boils down to fulfilling the assumptions of Lemma~\ref{zajicek_prop}.

\begin{proof}[Proof of Theorem \ref{thm1}]
Choose $\vect{x}\in E$ arbitrarily and define $$S_n=\left\{\vect{u}\in\Es{1}:\frac{\abs{f(\vect{x}+h\vect{u})-f(\vect{x})}}{h}<n\text{~for all~}0<h<\frac{1}{n}\right\}.$$ The definition of $E$ then yields $\Es{1}=S_0\cup\bigcup_{n=1}^{\infty} S_n$ where $S_0\subset \Es{1}$ is of the first category in $\Es{1}$ (which is a complete metric space). The Baire category theorem shows
 that there exists $n\in\N$ such that $S_n$ is dense in a nonempty open subset of $\Es{1}$. Hence there exists $\vect{v}\in\Es{1}$ and $\delta>0$ such that
 \begin{equation}\label{husto_v_kuzelu_2D}
     \{\vect{u}\in\Es{1}:\Abs{\vect{u}-\vect{v}}<\delta \}\subset \overline{S_n}.
 \end{equation} In other words, using Definition \ref{kuzel}, the union of open segments $U=\bigcup_{\vect{u}\in S_n}\left\{\vect{x}+t\vect{u}: t\in\left(0,\frac{1}{n}\right)\right\}$  is dense in $C=C\left(\vect{x},\vect{v},\delta,\frac{1}{n}\right)$.
 
 Consider any $\vect{y}\in C\cap E$. Due to the definition of $E$ the set of directions $\vect{u}\in\Es{1}$ such that $f$ is Lipschitz at $\vect{y}$ in direction $\vect{u}$ is residual in $\Es{1}$. Let us pick such $\vect{u}$ which is moreover different from $\pm\frac{\vect{y}-\vect{x}}{\Abs{\vect{y}-\vect{x}}}$ and observe that, since every two non-parallel lines in $\R^2$ intersect each other, it follows from condition \eqref{husto_v_kuzelu_2D} that there is a sequence $\left(t_i\right)_{i=1}^{\infty}$ of positive numbers such that $\vect{y}+t_i\vect{u}\in U$ for all $i\in\N$ and $t_i \xrightarrow{i\to\infty}0+$. Since $f$ is continuous at $\vect{y}$ in direction $\vect{u}$ it follows that $$\frac{\abs{f(\vect{y})-f(\vect{x})}}{\Abs{\vect{y}-\vect{x}}}=\lim_{i\to\infty}\frac{\abs{f(\vect{y}+t_i\vect{u})-f(\vect{x})}}{\Abs{\vect{y}+t_i\vect{u}-\vect{x}}}\leq 
 n.$$ We have proved that for every $\vect{x}\in E$ there is an open circular sector $C$ with vertex at $\vect{x}$ such that $f$ is Lipschitz at $\vect{x}$ relative to $E\cap C$. To finish the proof, apply Lemma \ref{zajicek_prop} with $M = E$ from which it follows that $A = F$.
\end{proof}

\begin{rem}
Since Theorem 4 from \cite{ash} does not essentially depend on dimension $d\geq 2$ it is natural to investigate the principle \eqref{heuristics} in higher dimensions. However, although Lemma \ref{zajicek_prop} works for all $d\geq 2$, our proof of Theorem \ref{thm1} makes use of a specific property of the Euclidean plane, namely that every two lines are either parallel or have a common point, and hence this proof cannot be directly generalized to higher dimensions. It is not known to the author whether such a  generalization of Theorem \ref{thm1} holds without further assumptions on $f$. 
\end{rem}

\section{Proof of the negative result}

Our construction further exploits the ideas introduced in the proof of Theorem 4 from \cite{ash} and, in addition, employs an iterative argument. We start by stating two useful facts proven in \cite{ash}.

\begin{prop}\label{uhel_kruhu}
Let $B=B(\vect{x},r)$ and $\vect{y}\in\R^2\setminus \overline{B}$. Then the angle $\theta$ that $B$ subtends when viewed from $\vect{y}$ satisfies $\theta<\pi\frac{r}{\Abs{\vect{y}-\vect{x}}}$.
\end{prop}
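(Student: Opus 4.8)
The plan is to reduce the statement to an elementary one-variable inequality. Write $D=\Abs{\vect{y}-\vect{x}}$ and note that the hypothesis $\vect{y}\in\R^2\setminus\overline{B}$ means precisely that $D>r$, so the quantity $t=r/D$ lies in the open interval $(0,1)$. The angle subtended by $B$ from $\vect{y}$ is the opening of the smallest wedge with apex $\vect{y}$ containing $B$, and this wedge is bounded by the two tangent lines drawn from $\vect{y}$ to $\partial B$. First I would compute this angle explicitly. If $P$ is a point where such a tangent touches $\partial B$, then the radius $\vect{x}P$ is perpendicular to the tangent, so the triangle with vertices $\vect{x}$, $P$, $\vect{y}$ is right-angled at $P$, with hypotenuse $\Abs{\vect{x}-\vect{y}}=D$ and opposite leg $\Abs{\vect{x}-P}=r$; hence the half-angle at $\vect{y}$ equals $\arcsin(r/D)$. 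By the symmetry of the two tangents, the full subtended angle is $\theta=2\arcsin(t)$.

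It therefore remains to verify that $2\arcsin(t)<\pi t$ for every $t\in(0,1)$. I would establish this by studying the auxiliary function $g(t)=\pi t-2\arcsin(t)$ on $[0,1]$. A direct computation gives $g(0)=0$ and $g(1)=\pi-2\cdot\tfrac\pi2=0$, while $g''(t)=-2t(1-t^2)^{-3/2}<0$ for $t\in(0,1)$. Thus $g$ is continuous on $[0,1]$ and strictly concave on $(0,1)$, and a strictly concave function vanishing at both endpoints of an interval is strictly positive throughout its interior. Hence $g(t)>0$, i.e. $2\arcsin(t)<\pi t$, for all $t\in(0,1)$; substituting $t=r/D$ then yields $\theta=2\arcsin(r/D)<\pi\, r/D$, as required.

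The computation itself is routine; the only point demanding real care is the \emph{source of the strictness}. At $t=1$ one has $2\arcsin(1)=\pi=\pi\cdot1$, so equality holds in the limiting configuration $D=r$, where the viewpoint lies on the boundary circle. The strict inequality in the conclusion is thus not automatic --- it relies precisely on the assumption that $\vect{y}$ lies outside the \emph{closed} ball $\overline{B}$, which forces $t<1$. This is exactly why the hypothesis is phrased with $\overline{B}$ rather than $B$, and it is the single place where I would take care to invoke \emph{strict} (rather than weak) concavity so that the final estimate remains a strict inequality.
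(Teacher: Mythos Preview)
Your argument is correct. The geometry gives $\theta=2\arcsin(r/D)$ exactly as you say, and the concavity argument for $g(t)=\pi t-2\arcsin t$ is clean: $g(0)=g(1)=0$, $g''(t)=-2t(1-t^2)^{-3/2}<0$ on $(0,1)$, hence $g>0$ on $(0,1)$, which is the desired strict inequality. Your remark about why strictness requires $\vect{y}\notin\overline{B}$ (not merely $\vect{y}\notin B$) is also to the point.

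As for comparison: the paper does not supply its own proof of this proposition at all --- it is stated as one of ``two useful facts proven in \cite{ash}'' and simply quoted. So there is no argument in the present paper to compare your approach against; your write-up is a valid self-contained proof of a result the paper only cites.
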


\begin{prop}[Borel-Cantelli lemma] \label{bc} If $\mathcal{A}=\left\{A_i:i\in\N\right\}$ is a system of measurable sets satisfying $\sum_{i=1}^{\infty}\abs{A_i}<\infty$, then $\abs{\limsup\mathcal{A}}=0$.
\end{prop}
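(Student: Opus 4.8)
The plan is to realize the set $\limsup\mathcal{A}$ as a nested intersection of tails of the union and then exploit that the tails of a convergent series vanish. First I would record the elementary set-theoretic identity
\begin{equation*}
\limsup\mathcal{A}=\bigcap_{n=1}^{\infty}\bigcup_{i=n}^{\infty}A_i,
\end{equation*}
which simply restates that a point lies in infinitely many of the $A_i$ if and only if, for every $n$, it lies in some $A_i$ with $i\geq n$. In particular $\limsup\mathcal{A}$ is measurable, being a countable intersection of countable unions of measurable sets, so its measure is well defined.

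The key estimate is then immediate from monotonicity and countable subadditivity of $\mu$. For every fixed $n\in\N$ we have $\limsup\mathcal{A}\subset\bigcup_{i=n}^{\infty}A_i$, hence
\begin{equation*}
\abs{\limsup\mathcal{A}}\leq\left|\bigcup_{i=n}^{\infty}A_i\right|\leq\sum_{i=n}^{\infty}\abs{A_i}.
\end{equation*}
Since the series $\sum_{i=1}^{\infty}\abs{A_i}$ converges by hypothesis, its tails $\sum_{i=n}^{\infty}\abs{A_i}$ tend to $0$ as $n\to\infty$. Letting $n\to\infty$ in the displayed inequality forces $\abs{\limsup\mathcal{A}}=0$, as claimed.

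There is essentially no serious obstacle here: the only points requiring any care are verifying the set-theoretic identity (which simultaneously yields the measurability of $\limsup\mathcal{A}$) and invoking \emph{countable} subadditivity rather than mere finite additivity, since the tail unions are genuinely infinite. No additivity, finiteness, or continuity-from-above of $\mu$ is needed, so the argument is valid for an arbitrary measure, matching the generality in which the statement is phrased.
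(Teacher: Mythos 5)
Your proof is correct and complete: the identity $\limsup\mathcal{A}=\bigcap_{n=1}^{\infty}\bigcup_{i=n}^{\infty}A_i$, monotonicity plus countable subadditivity, and the vanishing tails of a convergent series constitute the canonical argument, and you rightly note that continuity from above is not needed since you pass to the limit in the numerical bound rather than in the measure of a decreasing sequence of sets. The paper itself gives no proof of this proposition, stating it as a known fact (the Borel--Cantelli lemma, cited from \cite{ash}), so your write-up supplies exactly the standard argument the authors implicitly rely on.
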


\smallskip
\begin{proof}[Proof of Theorem \ref{thm2}]
We proceed in three steps. The first step generalizes the construction from Paragraph 3.2 of \cite{ash} to arbitrary open subset of $\R^2$ and defines the set $Z$ by iterating this construction. In the second step we define the ``good" set $E$ and prove that it has full measure in $\R^2$. The third step shows that $\chi_Z$ is nowhere Lipschitz (even continuous) relative to $E$.

\medskip
\noindent \textit{Step 1.}
Given any non-empty open set $G\subset\R^2$, let $\left\{\vect{x}_i: i\in\N\right\}$ be a dense subset of $G$. Set $n_1=1$ and take an open ball $B_1=B(\vect{x}_1,r_1)$ such that $B_1\subset G$ and 
\begin{equation}\label{prvni_polomer}
    r_1\leq\min\left\{\frac{1}{4},\frac{\diam G}{8}\right\}.
\end{equation} For $k\in\N$ and pairwise disjoint open discs $B_1,\dots,B_k$ let $n_{k+1}$ be the smallest $n\in\N$ such that $\vect{x}_n\notin\overline{\bigcup_{i=1}^k B_i}$, define
\begin{equation}\label{polomery_2}
    r_{k+1}=\min\left\{\frac{\min\left\{1,\frac{1}{2}\diam{G}\right\}}{4^{k+1}},\frac{1}{2}\dist{\vect{x}_{n_{k+1}}}{\left(\R^2\setminus G\right)\cup\bigcup_{i=1}^{k}B_i}\right\}
\end{equation} and finally let $B_{k+1}=B(\vect{x}_{n_{k+1}},r_{k+1})$. It is easy to see that this inductive procedure continues beyond any finite number of steps ($n_{k+1}$ always exists and $r_{k+1}>0$ is well defined) and yields a~countable system 
\begin{equation*}
\mathcal{S}(G)=\left\{B_i:i\in\N\right\}    
\end{equation*} of pairwise disjoint open discs contained in $G$ such that its union is dense in $G$. Next, we define the corresponding system of enlarged closed discs 
\begin{equation*}
    \mathcal{E}(G)=\left\{\overline{2^i*B_i}:i\in\N\right\}.
\end{equation*} 
Note that it follows from conditions \eqref{prvni_polomer} and \eqref{polomery_2} that $2^ir_i\leq \frac{1}{2^{i}}$ for all $i\in\N$ and any non-empty open set $G\subset \R^2$ thus satisfies
 \begin{equation}\label{finite_measure}
      \sum_{B\in \mathcal{S}(G)}\abs{B}\leq\sum_{B\in \mathcal{E}(G)}\abs{B}\leq\pi\sum_{i=1}^{\infty}\left(\frac{1}{2^i}\right)^2 < \infty.
 \end{equation} 
For any disc $B$ of radius $R>0$ conditions \eqref{prvni_polomer} and \eqref{polomery_2} similarly imply
  \begin{equation}\label{small_measure}
     \abs{\bigcup\mathcal{S}(B)}\leq \pi R^2\sum_{i=1}^{\infty}\left(\frac{1}{4^i}\right)^2<\frac{1}{2}\abs{B}
\end{equation} which in particular shows that
\begin{equation}\label{small_measure_cor}
    B\setminus\bigcup\mathcal{S}(B)\text{~is a Borel set of positive measure}.
\end{equation}
Set $\mathcal{D}_1=\mathcal{S}(\R^2)$ and for $k\in\N$ define inductively $\mathcal{D}_{k+1}= \bigcup_{B\in\mathcal{D}_k}\mathcal{S}(B)$. Finally, let us set
\begin{equation}\label{def_Z}
    Z=\bigcup_{m=1}^{\infty} \left(\bigcup\mathcal{D}_{2m-1}\setminus\bigcup\mathcal{D}_{2m}\right).
\end{equation} In terms of $\chi_Z$, this construction starts with the zero function, changes its value to one on discs forming system $\mathcal{D}_1$, then changes it back to zero on all the subsystems forming $\mathcal{D}_2$, etc. 

\medskip
\noindent \textit{Step 2.} 
First define 
\begin{equation*}
    E=\left\{\vect{x}\in\R^2:\chi_Z\text{~is differentiable in almost every direction at~}\vect{x}\right\}.
\end{equation*} Note that the set $C=\bigcup_{k\in\N}\bigcup_{B\in \mathcal{D}_k} \partial B$ has zero Lebesgue measure. Using Proposition \ref{bc} it follows from \eqref{finite_measure} that 
\begin{equation}\label{n_nula}
    N_0=C\cup\limsup\mathcal{E}(\R^2)
\end{equation} is also a null set. 

Let us denote the angle subtended by a disc $D$ when viewed from point $\vect{y}\notin\overline{D}$ by $\theta_D$ and~take any $\vect{y}\in\R^2\setminus\left(\bigcup\mathcal{D}_1\cup N_0\right)$. Definition \eqref{n_nula} implies that $\displaystyle \vect{y}\notin \overline{B}$ whenever $B\in\mathcal{D}_1$ and if $B=2^j*B_j\in\mathcal{E}(\R^2)$, then $\vect{y}\in\overline{B}$ only if $j\in F$ for some finite set $F\subset\N$. Proposition \ref{uhel_kruhu} yields
\begin{equation*}
    \sum_{D\in\mathcal{D}_1\setminus\left\{B_j:j\in F\right\}}\theta_D\leq \pi\sum_{i=1}^{\infty}\frac{1}{2^i}<\infty
\end{equation*} and Proposition \ref{bc} (now for the Hausdorff measure $\mathcal{H}^1$ on $\Es{1}$) then shows that the set of directions $\vect{u}\in\Es{1}$, such that the ray $\left\{\vect{y}+t\vect{u}:t\in(0,\infty)\right\}$ intersects infinitely many discs from $\mathcal{D}_1$, is null in $\Es{1}$. It follows that in all the other directions $\chi_Z$ is identically zero in a positive distance from $\vect{y}$ and we conclude that $\R^2\setminus\left(\bigcup\mathcal{D}_1\cup N_0\right)\subset E$.

\smallskip
For any $m\in\N$ we define
\begin{equation*}
    N_m=C\cup\bigcup_{B\in\mathcal{D}_m}\left(B\cap\limsup\mathcal{E}(B)\right).
\end{equation*} Since $\mathcal{D}_m$ consists of pairwise disjoint open discs and set $N_m$ has zero measure, analogous arguments as above show that for every $\vect{y}\in\left(\bigcup \mathcal{D}_m\right)\setminus (\bigcup \mathcal{D}_{m+1}\cup N_m)$ the function $\chi_Z$ is constant in almost every direction in a positive distance from $\vect{y}$. Hence ${\left(\bigcup \mathcal{D}_m\right)\setminus (\bigcup \mathcal{D}_{m+1}\cup N_m)\subset E}$ for every $m\in\N$ and altogether we end up with
\begin{equation*}
    \R^2\setminus E \subset  \left(\bigcup_{i=0}^{\infty}N_i\right)\,\cup\,\left(\bigcap_{m=1}^{\infty}\bigcup\mathcal{D}_m\right).
\end{equation*} Conditions \eqref{finite_measure} and \eqref{small_measure} together with the definition of $\mathcal{D}_m$ imply $\abs{\bigcup \mathcal{D}_m}\xrightarrow{m\to\infty}0$ and we conclude that $\abs{\R^2\setminus E}=0$.

\medskip
\noindent \textit{Step 3.} 
Observe that since $E\subset \R^2$ has full measure, it follows from assertion \eqref{small_measure_cor} that for every $m\in\N$ and $B\in\mathcal{D}_m$ the set $\left(B\setminus\bigcup\mathcal{S}(B)\right)\cap E$ is non-empty. In particular, it follows that 
\begin{equation}\label{observace}
   \text{~for any odd~} m\in\N \text{~and disc~} B\in\mathcal{D}_m, \text{~the equality~} \chi_Z=1 \text{~holds somewhere in~} E\cap B.
\end{equation} 

\smallskip
\noindent Take any $\vect{x}\in\R^2\setminus \bigcup \mathcal{D}_1$  and $\varepsilon>0$. Then $\chi_Z(\vect{x})=0$ and note that it follows from condition \eqref{polomery_2} that the pairwise disjoint discs forming $\mathcal{D}_1$ do not touch (i.e. their boundary circles are pairwise disjoint as well). Thus there is at most one disc $D\in\mathcal{D}_1$ such that $\vect{x}\in\partial D$. For the same reason, union of any finite subsystem $\mathcal{D}'\subset\mathcal{D}_1$ not containing $D$ has a positive distance from $ D\cap B(\vect{x},\frac{\varepsilon}{2})$ (or simply to $\{\vect{x}\}$ when there is no such $D$). In both cases the density of $\bigcup\mathcal{D}_1$ in $\R^2$ implies that there must be infinitely many discs from $\mathcal{D}_1$ intersecting $B(\vect{x},\frac{\varepsilon}{2})$ and due to the decay of their radii implied by formula \eqref{polomery_2} we can find one contained in $B(\vect{x},\varepsilon)$. Assertion \eqref{observace} then yields a point $\vect{y}\in B(\vect{x},\varepsilon)\cap E$ such that $\chi_Z(\vect{y})=1$. Since $\varepsilon>0$ was arbitrary, we conclude that $\chi_Z$ is not continuous at $\vect{x}$ relative to $E$.

Analogous arguments work for any $m\in\N$, $B\in\mathcal{D}_m$ and $\vect{x}\in B\setminus \bigcup\mathcal{D}_{m+1}=B\setminus \bigcup\mathcal{S}(B)$ (recall that elements of $\mathcal{D}_m$ are pairwise disjoint) with the only modification being that for odd $m$ the roles of values $0$ and $1$ of $\chi_Z$ interchange.

Finally, for any $\vect{x}\in\bigcap_{m=1}^{\infty}\bigcup\mathcal{D}_m$ and $\varepsilon>0$ observe that $\chi_Z(\vect{x})=0$ and conditions \eqref{prvni_polomer} and \eqref{polomery_2} imply $\sup_{B(\vect{z},r)\in\mathcal{D}_k}r\xrightarrow{k\to\infty}0$. Hence there are index $i\in\N$ and disc $D\in\mathcal{D}_{2i-1}$ such that $\vect{x}\in D\subset B(\vect{x},\varepsilon)$ and again by observation \eqref{observace} we find a point $\vect{y}\in D\cap E$ such that $\chi_Z(\vect{y})=1$. As $\varepsilon>0$ was arbitrary, the conclusion follows.  
\end{proof}

\end{document}